\documentclass[12pt,letterpaper]{article}
\usepackage[T1]{fontenc}
\usepackage{lmodern}
\usepackage[textwidth=5.45in,textheight=8.05in]{geometry}
\usepackage{amsmath,amssymb,amsthm}
\usepackage{microtype}
\usepackage[hidelinks]{hyperref}
\hypersetup{pdftitle={Rational linear forms of linearizable ordinary differential equations},pdfauthor={Dmitry A. Lyakhov},pdfsubject={Proposed shortened revision focused on rationality over the coefficient field}}
\newtheorem{theorem}{Theorem}
\newtheorem{lemma}[theorem]{Lemma}

\newcommand{\C}{\mathbb C}
\newcommand{\Q}{\mathbb Q}
\newcommand{\Z}{\mathbb Z}
\newcommand{\Span}{\operatorname{span}}

\title{Rational linear forms of linearizable\\ordinary differential equations}
\author{Dmitry A. Lyakhov}
\date{}
\begin{document}
\maketitle
\begin{abstract}
Every scalar ordinary differential equation of order at least three with rational right-hand side that is locally linearizable by a point transformation admits a linear form with rational coefficients over the same coefficient field. We prove this by restricting the derived symmetry algebra to a coordinate line and recovering a scalar differential operator from rational symmetry-jet data. The construction uses differential elimination and linear algebra; it does not require the symmetry generators or a linearizing transformation to be solved for. A single integer parameter suffices to choose the line.
\end{abstract}

\section{Statement and background}

Let $K\subset\C$ be a field and consider
\begin{equation}\label{eq:input}
 y^{(n)}+f(x,y,y',\ldots,y^{(n-1)})=0,
 \qquad f\in K(x,y,y',\ldots,y^{(n-1)}),\quad n\geq3.
\end{equation}
The equation is \emph{linearizable} if a locally invertible analytic point transformation $t=\psi(x,y)$, $u=\phi(x,y)$ maps it to
\begin{equation}\label{eq:linear}
 u^{(n)}+\sum_{k=0}^{n-1}a_k(t)u^{(k)}=0.
\end{equation}
All equivalences are local, away from singularities and where the prolonged transformation is defined.

\begin{theorem}\label{thm:main}
If \eqref{eq:input} is linearizable, it has a linear form
\begin{equation}\label{eq:output}
 w^{(n)}+\sum_{k=0}^{n-1}b_k(\lambda)w^{(k)}=0,
 \qquad b_k\in K(\lambda).
\end{equation}
If exact arithmetic and zero tests in $K$ are effective, such a form can be computed by differential elimination and linear algebra over rational function fields, without solving differential equations or constructing a linearizing transformation.
\end{theorem}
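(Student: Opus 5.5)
The plan is to use the classical symmetry characterization of linearizable equations of order $n\ge3$. Such an equation has a point-symmetry algebra $L$ of maximal dimension $n+4$; for $n=3$ it is $n+4$, $n+3$ or $n+2$, according to whether the linear equation is equivalent to $u'''=0$ or not. In every case $L$ contains an $n$-dimensional abelian ideal $A$ spanned by the solution symmetries $\phi_i(t)\partial_u$, where $\phi_1,\dots,\phi_n$ is a fundamental system of \eqref{eq:linear}. The ideal $A$ can be recovered canonically from $L$: it is contained in the derived algebra $L'=[L,L]$ and can be characterized inside $L'$, for instance as its maximal abelian ideal, or by the rank condition that the generators are pointwise proportional.

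Since the determining equations for point symmetries of \eqref{eq:input} are linear with coefficients in $K(x,y)$, differential elimination (bringing them to an involutive, or Janet/Rosenfeld--Gröbner, form) gives a finite-type system over $K(x,y)$. Its solution space is parametrized by finitely many \emph{parametric derivatives} at a point. The bracket structure of $L$, and then of $L'$ and $A$, is encoded by linear algebra on these symmetry jets, with entries in $K(x,y)$.

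Next I restrict to a line. I choose a coordinate line $y=cx$, or $x=c$, with $c\in\Z$ chosen so that the line avoids the singular loci and is transversal to the orbits of $A$; the singular loci form a proper Zariski-closed set, so all but finitely many integers work. On the line, with parameter $\lambda$, each $\xi\in A$ restricts to a pair of functions $(\xi^x(\lambda),\xi^y(\lambda))$. Proportionality of the generators of $A$ gives a common direction field $\eta(\lambda)$ and scalar functions $g_i(\lambda)$ with $\xi_i|_{\text{line}}=g_i(\lambda)\eta(\lambda)$. The span $V=\Span\{g_1,\dots,g_n\}$ is $n$-dimensional, because the restriction to a transversal line is injective on $A$: a solution symmetry vanishing along a transversal curve vanishes identically.

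The desired operator is the monic order-$n$ operator $D$ in $d/d\lambda$ whose kernel is $V$. It is $w\mapsto \mathrm{Wr}(g_1,\dots,g_n,w)/\mathrm{Wr}(g_1,\dots,g_n)$. The equation $Dw=0$ is point-equivalent to \eqref{eq:linear}: the curve gives a change of independent variable $t=t(\lambda)$, and rescaling $u$ by a function of $\lambda$ relates $V$ to $\Span\{\phi_i\}$. Hence \eqref{eq:input} is equivalent to \eqref{eq:output}.

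The main obstacle is rationality: the $b_k$ must lie in $K(\lambda)$, even though the $g_i$ are transcendental and no basis of $A$ defined over $K$ is available. My first attempt is to observe that the coefficients of $D$ are ratios of Wronskian minors, hence invariant under $GL(V)$. They can therefore be computed from any basis of $V$, equivalently from the jet data. Concretely, the jets of $\xi\in A$ along the line satisfy a finite-type linear system over $K(\lambda)$, obtained by substituting the line into the involutive form. Eliminating all parametric derivatives except $g$ and its $\lambda$-derivatives then yields a scalar linear ODE over $K(\lambda)$ satisfied by every $g\in V$. This elimination is pure linear algebra over $K(\lambda)$: one expresses $g,g',\dots,g^{(n)}$ in terms of the $n$ free initial data and finds a $K(\lambda)$-linear dependency.

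The delicate points are, first, that defining $A$ through $L'$ must be expressible by $K$-rational linear conditions on jets. This holds because brackets and proportionality are polynomial in the jets with coefficients in $K(x,y)$, and $A$ is canonical, hence invariant under $\mathrm{Gal}(\overline{K}/K)$ and so defined over $K$. Second, the direction normalization $g_i=\xi_i^y/\eta^y$, with $\eta$ a $K(\lambda)$-rational direction (for example the $y$-component ratio), must be chosen rationally. Every step is elimination or linear algebra over rational function fields, which gives the effectiveness claim of Theorem~\ref{thm:main}.
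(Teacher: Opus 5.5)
Your construction is essentially the paper's. You use involutive jet data over $F=K(x,y)$, treat the solution-symmetry ideal as an $F$-rational subbundle of jets, restrict one component to an integer coordinate line, and take the $K(\lambda)$-linear dependency among $g,g',\dots,g^{(n)}$. That dependency is exactly $b=-R_nEM^{-1}$; specializing before running the recurrence is the variant the paper mentions for efficiency.

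The gap is in how you certify the output. Two of your steps write the generators on the line as $\phi_i(\psi)$ times the pull-back of $\partial_u$ and compose with the given linearization: the proof that $V$ is $n$-dimensional, and the proof that $Dw=0$ is point-equivalent to \eqref{eq:linear}. Both therefore presuppose that the line meets the domain of the given linearizing transformation. That transformation is only a germ on some small analytic set. Your line is fixed by an integer $c$ subject to Zariski-open conditions, so in general it misses that set. Since the procedure never sees the chart, it cannot steer the line into it. Choosing $c\in\Q$ so that the line hits the chart would rescue existence, but not the computability claim.

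What is missing is a transfer of the needed facts to the line in rational form, as the paper does with the lines $y=c$ or $x=c$ and a fixed direction $D$. Let $E$ be a basis matrix of the jet bundle of $A$. In the chart three things hold:
\begin{itemize}
\item $B$ vanishes on that bundle;
\item the $2\times 2$ minors of the matrix with rows $r_\xi E$ and $r_\eta E$ vanish;
\item $\det M\neq 0$.
\end{itemize}
These are two identities and a non-identity in $F$, so they hold on the whole regular set, and $c$ must also avoid the numerator of $\det M$. Near the line you can then straighten the commuting rank-one generators to $X_i=w_i(\lambda)\partial_z$ with $\ell=\{z=0\}$. Infinitesimal invariance of $z^{(n)}=H$ and invertibility of the Wronskian give $H_{z^{(k)}}=-b_k|_\ell$, and subtracting a particular solution yields \eqref{eq:output}.

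Alternatively, you could argue that the criterion \eqref{eq:criterion} is certified by rational data at every regular point, so a linearization exists near the line, and rerun your composition argument there. This requires justifying that the criterion is pointwise.

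Two smaller corrections:
\begin{itemize}
\item For every $n\geq 3$ the possible dimensions are $n+1$, $n+2$, $n+4$, not $n+4,n+3,n+2$.
\item In the non-maximal cases $A=L'$, whose jet bundle $\Span_F\{B(e_i,e_j)\}$ is visibly $F$-rational, so no Galois descent is needed. Any descent would in any case have to be over $F$, not $K$. The maximal case can simply return $w^{(n)}=0$.
\end{itemize}
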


In particular, $K$ may be the field generated over $\Q$ by the finitely many coefficients of $f$. For the algorithmic assertion, an effective presentation of this field is required; an explicitly given finite algebraic extension of a rational function field over $\Q$ suffices. No extension of the constant field is needed. The theorem asserts the existence of a rational linear form, not rationality of every linear form or of the transformation itself.

Linearizability can be decided from the Lie point symmetry algebra $L$ \cite{MahomedLeach,LGM}. Writing $m=\dim_{\C}L$ and $L'=[L,L]$, the criterion for $n\geq3$ is
\begin{equation}\label{eq:criterion}
 m=n+4,\quad\text{or}\quad
 m\in\{n+1,n+2\},\quad \dim L'=n,\quad [L',L']=0.
\end{equation}
The maximal case has the linear form $u^{(n)}=0$. In the other two cases, in any linearizing coordinates,
\begin{equation}\label{eq:shifts}
 L'=\{v(t)\partial_u:\ v\text{ solves }\eqref{eq:linear}\}.
\end{equation}
Indeed, these $n$ independent symmetries belong to $L'$ because $[u\partial_u,v\partial_u]=-v\partial_u$; dimension then gives equality.

For $m=n+2$, recovery of a constant-coefficient form from the abstract symmetry algebra was studied in \cite{LM}. For $m=n+1$, the abstract algebra is always the semidirect product of an $n$-dimensional abelian ideal with a scalar action and cannot determine variable coefficients. The proof below instead uses its realization by vector fields, encoded by rational equations for their jets.

\section{Proof of the theorem}

\begin{lemma}\label{lem:jets}
Put $F=K(x,y)$. An involutive completion of the symmetry determining system yields a vector $q(X)$ of $m$ parametric derivatives for every $X=\xi\partial_x+\eta\partial_y\in L$, matrices $A_x,A_y\in F^{m\times m}$, rows $r_\xi,r_\eta\in F^{1\times m}$, and an $F$-bilinear map $B:F^m\times F^m\to F^m$, with
\begin{align}\label{eq:jets}
 \partial_xq(X)&=A_xq(X),& \partial_yq(X)&=A_yq(X),\notag\\
 \xi&=r_\xi q(X),&\eta&=r_\eta q(X),\\
 q([X,Y])&=B(q(X),q(Y)).&&\notag
\end{align}
On a nonempty algebraic open set, evaluation $X\mapsto q(X)(p)$ identifies symmetry germs with $\C^m$. Moreover,
\begin{equation}\label{eq:derived}
 E'=\Span_F\{B(e_i,e_j):1\leq i,j\leq m\}
\end{equation}
has fibre $q(L')(p)$ at every regular point.
\end{lemma}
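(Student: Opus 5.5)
We start from the determining system for $\xi,\eta$. The symmetry condition for \eqref{eq:input} is the vanishing of the prolonged generator applied to $y^{(n)}+f$ on the equation. After substituting $y^{(n)}=-f$, we clear denominators and take coefficients of the monomials in $y',\ldots,y^{(n-1)}$. This gives a finite linear homogeneous PDE system in $\xi(x,y),\eta(x,y)$, and since $f\in K(x,y,y',\ldots)$, all its coefficients lie in $F=K(x,y)$. We then run a differential completion algorithm over $F$: Riquier--Janet or Rosenfeld--Gröbner, equivalently Reid's standard form, with respect to a fixed ranking. Its output is an involutive linear system with coefficients in $F$, together with an algebraic open set on which the leading coefficients and the pivots are nonzero.

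Because $L$ is finite dimensional, the completed system is of finite type. Its parametric derivatives form a finite set, listed as $q(X)$, and every other derivative of $\xi,\eta$ is an $F$-linear combination of them. The steps are as follows.
\begin{enumerate}
\item The rows $r_\xi,r_\eta$ come directly from this reduction. If $\xi$ or $\eta$ is itself parametric, the corresponding row is a unit row.
\item Differentiating each parametric derivative with respect to $x$ or $y$ gives either another parametric derivative or a principal one, which reduces to $q$. This defines $A_x,A_y$.
\item Involutivity gives the integrability condition $\partial_yA_x-\partial_xA_y+[A_x,A_y]=0$ on the open set.
\item Frobenius' theorem then says that each initial value $q(p)\in\C^m$ extends to a unique analytic solution germ. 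The solution space of the completed system equals that of the original determining system, namely the germs of $L$.
\item Its dimension equals the number of parametric derivatives. This count is $m=\dim L$ once $p$ avoids the degeneracy locus, so evaluation is an isomorphism onto $\C^m$.
\end{enumerate}

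For $B$, we write $[X,Y]=(\xi_1\xi_{2,x}+\eta_1\xi_{2,y}-\ldots)\partial_x+\ldots$. Each parametric derivative of the components of $[X,Y]$ is obtained by differentiating these bilinear expressions by the Leibniz rule. Every factor is a derivative of $\xi_i,\eta_i$, and each such derivative reduces $F$-linearly to $q(X)$ or $q(Y)$. The result is an $F$-bilinear expression, and that defines $B$.

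The claim about $E'$ is then checked fibrewise. At a regular point $p$, the map $q\mapsto q(p)$ is bijective from $L$ to $\C^m$. Hence the values $q(L')(p)$ are spanned by $B(q(X_i),q(X_j))(p)=B(p)(q(X_i)(p),q(X_j)(p))$, where $X_i$ run over a basis of $L$. Since the $q(X_i)(p)$ form a basis of $\C^m$, bilinearity shows that this span equals $\Span_\C\{B(p)(e_i,e_j)\}$. This is the fibre of $E'$, provided the rank of the $F$-span is attained at $p$. Points of maximal rank form an algebraic open set, which we intersect with the previous one and include in the meaning of ``regular''.

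The step I expect to be the main obstacle is ensuring that a single algebraic open set works for all of these at once: nonvanishing of the separants and initials, the parametric count equalling $m$ rather than jumping on a subvariety, and constancy of rank for $E'$. I will first try to handle this by taking ``regular'' to mean the complement of the zero sets of finitely many explicit elements of $F$ produced by the algorithm. I would then argue, via analytic continuation along the connected open set, that the fibre description of $E'$ and the solution-space dimension are constant there.
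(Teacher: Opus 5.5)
Your proposal is correct and follows essentially the same route as the paper. You complete the $F$-linear determining system to finite type, read off $r_\xi,r_\eta,A_x,A_y$ from the reduction of $\xi,\eta$ and of the first derivatives of the parametric derivatives, obtain $B$ by reducing the Leibniz-expanded bracket, use the compatible first-order system (Frobenius) to identify germs with initial vectors, and pass brackets fibrewise to $E'$; this spells out what the paper packs into ``the regular set,'' and the maximal-rank condition you add there is in fact automatic, because the rank of $\{B(p)(e_i,e_j)\}$ equals $\dim q(L')(p)=\dim L'$ and so is independent of $p$ on the connected set where evaluation is bijective and $B$ is defined.
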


\begin{proof}
The determining equations are linear over $F$. Involutive reduction expresses every derivative of $\xi,\eta$ as an $F$-linear combination of the parametric derivatives \cite{ReidA,ReidB,Seiler}. Reducing their first derivatives gives $A_x,A_y$; reducing $\xi,\eta$ gives the two rows. Reducing the derivatives of the Lie bracket gives $B$, since these derivatives are bilinear in the jets of $X,Y$.

On the regular set, the completed finite-type system is equivalent to the compatible first-order system in \eqref{eq:jets}. Its analytic solutions are uniquely determined by arbitrary initial vectors. This proves the evaluation assertion and, by taking brackets of arbitrary initial vectors, \eqref{eq:derived}.
\end{proof}

\begin{proof}[Proof of Theorem~\ref{thm:main}]
For $m=n+4$, return $w^{(n)}=0$. Otherwise \eqref{eq:criterion} gives $\dim_F E'=n$. Let the columns of $E\in F^{m\times n}$ form a basis of $E'$.

Choose a component and a differentiation direction by
\[
 (r,D,A)=
 \begin{cases}
 (r_\eta,\partial_x,A_x),&r_\eta E\ne0,\\
 (r_\xi,\partial_y,A_y),&r_\eta E=0.
 \end{cases}
\]
In the second case $r_\xi E\ne0$, since $L'$ is not the zero space of vector fields. Recursively define
\begin{equation}\label{eq:recurrence}
 R_0=r,\qquad R_{k+1}=D R_k+R_kA,
 \qquad
 M=\begin{pmatrix}R_0E\\ R_1E\\ \vdots\\ R_{n-1}E\end{pmatrix}.
\end{equation}
All these objects are rational over $K$.

We first prove $\det M\ne0$. In a chart of any given linearization, put $J=\psi_x\phi_y-\psi_y\phi_x\ne0$. The inverse image of $\partial_u$ is
\begin{equation}\label{eq:vertical}
 V=\frac{-\psi_y\partial_x+\psi_x\partial_y}{J},
 \qquad X_v=v(\psi)V\in L'.
\end{equation}
Thus $r_\eta E\ne0$ is equivalent to $\psi_x\not\equiv0$. In this case, on a local horizontal line, the selected components of a basis of $L'$ are
\[
 w_i(\lambda)=\mu(\lambda)v_i(T(\lambda)),\qquad
 T=\psi|_\ell,\quad \mu=(\psi_x/J)|_\ell.
\]
One can choose this local line so that $\mu T'\ne0$. If $r_\eta E=0$, then $\psi_x=0$ and $\psi_y\ne0$; use a vertical line and $\mu=(-\psi_y/J)|_\ell$ instead. In both cases,
\begin{equation}\label{eq:wronskian}
 W_\lambda(w_1,\ldots,w_n)
 =\mu^n(T')^{n(n-1)/2}W_t(v_1,\ldots,v_n)\circ T\ne0.
\end{equation}
By \eqref{eq:jets}, $D^k(rq(X))=R_kq(X)$. For a basis $X_1,\ldots,X_n$ of $L'$, write
\[
 (q(X_1),\ldots,q(X_n))=EC.
\]
The analytic matrix $C$ is invertible at regular points by Lemma~\ref{lem:jets}. Consequently, the Wronskian matrix is $MC$, so \eqref{eq:wronskian} proves that $M$ is invertible over $F$.

Define the rational row
\begin{equation}\label{eq:b}
 b=(b_0,\ldots,b_{n-1})=-R_nE\,M^{-1}\in F^{1\times n}.
\end{equation}
It gives the identity
\begin{equation}\label{eq:annihilator}
 D^n(rq(X))+\sum_{k=0}^{n-1}b_kD^k(rq(X))=0
 \qquad (X\in L').
\end{equation}
There is no differentiation of the moving coordinates $C$ in this step: all derivatives have already been computed in the ambient jet system by \eqref{eq:recurrence}.

Now specialize to $\ell:(x,y)=(\lambda,c)$ in the first case, or $\ell:(x,y)=(c,\lambda)$ in the second. Choose $c\in\Z$ so that no required denominator, regularity factor, a fixed nonzero rank minor of $E$, or the numerator of $\det M$ vanishes identically on $\ell$. Only finitely many $c$ are excluded: for a nonzero polynomial in the two variables, every bad $c$ is a root of a fixed nonzero coefficient polynomial in the specialized variable. Hence such an integer exists, and \eqref{eq:b} restricts to coefficients in $K(\lambda)$.

For clarity, equivalence can be verified near this chosen line even if it does not meet the initial linearizing chart. At an ordinary point of $\ell$, take local symmetry germs $X_1,\ldots,X_n$ spanning $L'$. They commute and have rank one as vector fields: the corresponding bracket identities and the $2\times2$ minors of the matrix with rows $r_\xi E,r_\eta E$ vanish in the initial chart, hence identically as rational functions. The line is transverse to this distribution, since the selected component is nonzero. The restricted components $w_i=(rq(X_i))|_\ell$ have a nonzero Wronskian by $W=\det M\det C$ and solve \eqref{eq:output} by \eqref{eq:annihilator}.

Straightening the distribution gives coordinates $(\lambda,z)$ with $\ell=\{z=0\}$ in which $X_i=w_i(\lambda)\partial_z$. To see the normalization, choose $w_1\ne0$, keep $\lambda$ constant on the orbits, and solve locally $X_1z=w_1(\lambda)$ with $z|_\ell=0$. Commutativity makes $X_i/X_1$ constant along the orbits, giving the stated formula for every $i$.

Write the transformed equation as $z^{(n)}=H(\lambda,z,\ldots,z^{(n-1)})$. Infinitesimal invariance under $w_i(\lambda)\partial_z$ gives
\[
 \sum_{k=0}^{n-1}w_i^{(k)}H_{z^{(k)}}=w_i^{(n)}
 =-\sum_{k=0}^{n-1}b_k|_\ell\,w_i^{(k)}.
\]
Invertibility of the Wronskian implies $H_{z^{(k)}}=-b_k|_\ell$. Thus the equation is linear, possibly inhomogeneous. Subtracting a local particular solution makes it exactly \eqref{eq:output}. These coordinate constructions are used only to prove equivalence; the computation consists of \eqref{eq:jets}--\eqref{eq:b} and rational specialization.
\end{proof}

\section{Computation and an example}

The proof gives the following procedure, including the linearizability test.
\begin{enumerate}
\item Complete the determining system to involution and determine $m$. If $m=n+4$, return $w^{(n)}=0$. If $m\notin\{n+1,n+2\}$, return \texttt{False}.
\item Construct the data of Lemma~\ref{lem:jets}. If $\dim E'\ne n$ or $B|_{E'\times E'}\ne0$, return \texttt{False}.
\item Choose $(r,D,A)$ as in the proof, form $M$ by \eqref{eq:recurrence}, and compute $b$ by \eqref{eq:b}.
\item Try $c=0,1,-1,2,-2,\ldots$ until the specialization conditions hold. Return \eqref{eq:output} with the specialized coefficients.
\end{enumerate}
Involutive completion terminates, the matrix computations are finite, and the last step excludes only finitely many integers. This proves termination under the effective-field hypothesis of Theorem~\ref{thm:main}. The search does not require an a priori degree bound. For efficiency, one may instead specialize the jet data first and perform the recurrence over $K(\lambda)$, rejecting singular choices by rank and determinant tests.

For a compact illustration, consider the polynomial equation over $\Q$
\begin{equation}\label{eq:example}
 y'''+3y'y''+(y')^3-x=0.
\end{equation}
The transformation $t=x$, $u=e^y$ gives $u'''=tu$. Its solution-translation symmetries pull back to
\[
 X_v=e^{-y}v(x)\partial_y,\qquad v'''=xv.
\]
On this ideal the rational reduced equations are $\eta_y=-\eta$ and $\eta_{xxx}=x\eta$. With $q=(\eta,\eta_x,\eta_{xx})^{\mathsf T}$, the $x$-connection is
\[
 \partial_xq=
 \begin{pmatrix}0&1&0\\0&0&1\\x&0&0\end{pmatrix}q.
\]
The component $\eta$ is cyclic: its first three rows are the coordinate rows, and the next row is $(x,0,0)$. Therefore \eqref{eq:b} gives $b=(-x,0,0)$. On $y=c$ the output is
\[
 w'''-\lambda w=0.
\]
The coefficients remain over $\Q$, although the displayed linearizing transformation is transcendental. No minimal-form construction or differential invariant is needed to recover this rational equation.


\begin{thebibliography}{9}
\bibitem{MahomedLeach}
F.~M. Mahomed and P.~G.~L. Leach.
Symmetry Lie algebras of $n$th order ordinary differential equations.
\emph{J. Math. Anal. Appl.} \textbf{151} (1990), 80--107.

\bibitem{LGM}
D.~A. Lyakhov, V.~P. Gerdt, and D.~L. Michels.
On the algorithmic linearizability of nonlinear ordinary differential equations.
\emph{J. Symbolic Comput.} \textbf{98} (2020), 3--22.

\bibitem{LM}
D.~A. Lyakhov and D.~L. Michels.
On the algorithmic recovering of coefficients in linearizable differential equations.
\href{https://arxiv.org/abs/2404.01798}{arXiv:2404.01798} (2024).

\bibitem{ReidA}
G.~J. Reid.
Algorithms for reducing a system of PDEs to standard form, determining the dimension of its solution space and calculating its Taylor series solution.
\emph{European J. Appl. Math.} \textbf{2} (1991), 293--318.

\bibitem{ReidB}
G.~J. Reid.
Finding abstract Lie symmetry algebras of differential equations without integrating determining equations.
\emph{European J. Appl. Math.} \textbf{2} (1991), 319--340.

\bibitem{Seiler}
W.~M. Seiler.
\emph{Involution: The Formal Theory of Differential Equations and its Applications in Computer Algebra}.
Springer, 2010.
\end{thebibliography}
\end{document}